\newtheorem{theorem}{Theorem}[section]
\newtheorem{lemma}[theorem]{Lemma}
\newtheorem{proposition}[theorem]{Proposition}
\newtheorem{corollary}[theorem]{Corollary}
\theoremstyle{definition}
\newtheorem{example}[theorem]{Example}
\theoremstyle{remark}
\numberwithin{equation}{section}
\begin{document}

\setcounter{page}{1}
\title[ Fuglede - Putnam Theorem  ]{ Asymmetric Fuglede - Putnam theorem for Unbounded $M$-Hyponormal operators }
\author[ T. PRASAD, E. SHINE LAL AND P. RAMYA  ]{ T. PRASAD, E. SHINE LAL AND P. RAMYA }
\address{T. Prasad \endgraf Department of Mathematics\endgraf University of Calicut\endgraf Malapuram, Kerala, \endgraf India - 673635.}
\email{prasadvalapil@gmail.com }
\address{E. Shine Lal \endgraf Department of Mathematics\endgraf University College, Thiruvananthapuram\endgraf Kerala, India -695034.}
\email{shinelal.e@gmail.com }
\address{P. Ramya \endgraf Department of Mathematics\endgraf N.S.S College, Nemmara\endgraf
 Kerala, India -678508.}
\email{ramyagcc@gmail.com }

\dedicatory{ }

\let\thefootnote\relax\footnote{}

\subjclass[2010]{47A05, 47A10, 47B20}

\keywords{Closed densely defined $M$-hyponormal operator, Fuglede-Putnam Theorem, Riesz projection.}

\begin{abstract}
A closed densely defined operator $ T $ on a Hilbert space $ \mathcal{H} $ is callled $M$-hyponormal if
  $\mathcal{D}(T) \subset \mathcal{D}(T^{*}) $ and there exists $ M > 0 $ for which  $ \parallel(T-zI)^{*}x \parallel \leq M \parallel(T-zI)x \parallel $ for all $ z \in \mathbb{C}$
  and for all $ x\in \mathcal{D}(T)$. In this paper, we prove that if   bounded linear operator $ A : \mathcal{H} \rightarrow  \mathcal{K}$ is such that $ AB^*\subseteq TA $, where  $ B $ is a closed  subnormal (resp. a closed $ M $-hyponormal) on $\mathcal{H}$, $ T $ is a closed $ M $-hyponormal (resp. a closed subnormal) on $\mathcal{H}$, then (i) $ AB\subseteq T^*A, $ (ii) $ {\overline{ran(A^{*})}} $ reduces $ B $ to the normal operator $ B\vert_{{\overline{ran(A^{*})}}}, $  and (iii) $ {\overline{ran(A)}} $ reduces $ T $ to the normal operator $ T\vert_{{\overline{ran(A)}}}. $ 

\end{abstract}\maketitle
\section{Introduction and Preliminaries}
Let $ \mathcal{H},\mathcal{K},\mathcal{L} $ be infinite dimensional complex Hilbert space. Let $ \mathcal{B}(\mathcal{H}) $ denotes the space of all bounded linear operators on $ \mathcal{H} $. The famous \textit {Fuglede - Putnam theorem} asserts that if $ T,N \in \mathcal{B}(\mathcal{H})  $ are normal operators and $ TX=XN $ for some $ X \in \mathcal{B}(\mathcal{H}) $, then $ T^*X=XN^* $ (\cite{ Fuglede, Halmos, mor, putnam1}). Several authors tried to relax the normality condition (\cite{cho1, bach,  mech, moore, raja}). But for subnormal operators, it does not hold. Asymmetric version of Fuglede - Putnam theorem for bounded subnormal operators, $ M $-hyponormal operators and dominant operators are studied by Furuta (\cite{furuta, furuta1})  Moore (\cite{moore}) and  Stampfli (\cite{stamp}) respectively. The study of unbounded operators is one of the major research area in the field of operator theory. In most of the cases operators involved in quantum mechanics, differential equations are unbounded.

 Let $ \mathcal{D}(T),~ker(T),~ ran(T) $ and $\mathcal{G}(T) $ denotes the domain, nullspace, range and graph of $ T $ respectively. Let $A= U ~\vert A \vert$ be the polar decomposition of $ A \in \mathcal{B}(\mathcal{H}) $, where $\vert A \vert = (A^{*}A)^{\frac{1}{2}}$.
 Let $ \mathcal{L}(\mathcal{H})$ and $ \mathcal{C}(\mathcal{H})$ denotes the space of all linear and closed linear operators  on $\mathcal{H}$ respectively. Let  $ T\vert_{\mathcal{M}} $ denotes the restriction of $ T $ in to the closed subspace $\mathcal{M} $ of $ D(T)$. A closed subspace $\mathcal{M}$ is said to be a core for $ T $ if $ \mathcal{G}(T)\subseteq \overline {\mathcal{G}(T|_{\mathcal{M}})} $ (\cite{stochel}).

  An operator $ T \in \mathcal{L}(\mathcal{H}) $ is said to be \textit{densely defined} if $\overline{\mathcal{D}(T)} = \mathcal{H}.$ For example,  $ T:l^{2}(\mathbb{N})\longrightarrow l^{2}(\mathbb{N}) $
 be defined by
\begin{align*}
T(x_{1},x_{2},x_{3}......) = (2x_{1},3x_{2},4x_{3},5x_{4},6x_{5}......)
\end{align*}
with $\mathcal{D}(T) = \lbrace (x_{1},x_{2}........)\in l^{2}( \mathbb{N}):\sum^{\infty}_{j=1}\vert (j+1) x_{j}\vert^{2}< \infty \rbrace$.
Since $ C_{00}\subseteq \mathcal{D}(T) $, and $ C_{00} $ is dense in $ l^{2}(\mathbb{N}) $, we have $ \mathcal{D}(T) $ is dense in $l^{2}(\mathbb{N})$.

An operator $ B \in \mathcal{C}(\mathcal{H})$ is said to be \textit{subnormal} if there is a Hilbert space $ \mathcal{K} \supseteq \mathcal{H} $ and a normal operator $ S $ in $ \mathcal{K} $ such that $ B \subseteq S$. An operator $ T \in \mathcal{B}(\mathcal{H})$ is said to be \textit{hyponormal} if $ T^{*}T \geq TT^{*}$ and is said to be  \textit{M-hyponormal}
 if $\parallel(T-zI)^{*}x \parallel \leq M \parallel(T-zI)x \parallel $ for all $ z \in \mathbb{C}$ and for all $ x\in \mathcal{H}$. It is evident that 
the following inclusion hold:
 $$ hyponormal \subset M-hyponormal.$$
 A  densely defined operator $ T\in \mathcal{C}(\mathcal{H})$ is said to be  $M$-hyponormal if
  $ \mathcal{D}(T) \subset \mathcal{D}(T^{*}) $ and there exists $ M > 0 $ for which  $ \parallel(T-zI)^{*}x \parallel \leq M \parallel(T-zI)x \parallel $ for all $ z \in \mathbb{C}$
  and for all $ x\in \mathcal{D}(T).$ \\
  Now we give an example for closed densely defined $ M $- hyponormal opearator, which is not hyponormal.
\begin{example}
   Let $ T:l^{2}(\mathbb{N})\longrightarrow l^{2}(\mathbb{N}) $ be defined by
\begin{align*}
T(x_{1},x_{2},x_{3}......) &= (0,x_{1},2x_{2},x_{3},4x_{4},5x_{5}......)\\&=(0,\alpha_{1}x_{1},\alpha_{2}x_{2},\alpha_{3}x_{3},\alpha_{4}x_{4}.........),
\end{align*}
where $ \alpha_{j} $ = $ \left\{ \begin{array}{rcl}1 & \mbox{if}& j = 1,3\\j & \mbox{if} & j = 2 $  $ \rm {and} $  $ j \geq 4\end{array}\right . $ \\
Let
 $\mathcal{D}(T) = \lbrace (x_{1},x_{2}........)\in l^{2}( \mathbb{N}):\sum^{\infty}_{j=1}\vert \alpha_{j} x_{j}\vert^{2}< \infty \rbrace$.  Since $ C_{00}\subseteq \mathcal{D}(T) $, and $ C_{00} $ is dense in $ l^{2}(\mathbb{N}) $, $ \mathcal{D}(T) $ is dense in $l^{2}(\mathbb{N})$. Since $ (\alpha_{n}) $ is eventually increasing, $ T $ is $M$-hyponormal (\cite{jun}). The adjoint of $ T, T^{*} $ is given by
\begin{align*}
  T^{*}( x_{1},x_{2},x_{3}......) =  (x_{2},2x_{3},3x_{4},4x_{5},5x_{6}......).
\end{align*}
Let $ e_{i} = (0,0,....,1,0,0,.....)$, where 1 occurs in the $ i^{th} $ place. Then
$$Te_{1} = e_{2},Te_{2} = 2e_{3}, Te_{3}=e_{4}, Te_{i}=ie_{i+1}  \mbox{ for } i  \geqslant 4.$$
 $$T^{*}e_{1}=0, T^{*}e_{2}=e_{1},T^{*}e_{3}=2e_{2},T^{*}e_{4}=e_{3},T^{*}e_{i}=(i-1)e_{i-1} \mbox { for }    i \geqslant 5.$$
Since $ \Vert T^{*}e_{3}\Vert = 2 $ and $ \Vert Te_{3}\Vert = 1, $ it follows that $ T $ is not hyponormal.
\end{example}
 Stochel (\cite{stochel}) studied asymmetric Fuglede - Putnam theorem for closed hyponormal (resp. closed subnormal) and closed subnormal (resp. closed hyponormal) operators. Recently, Bensaid, Dehimi, Fuglede and Mortad(\cite{BDFM}), studied new and classic version of Fuglede theorem in an unbounded setting. Assymmetric Fuglede - Putnam theorem  for some class unbounded operators has been studied by Mortad (\cite{mortad, mortad1}) and  Paliogiannis (\cite{palio}).  In this paper we study asymmetric  Fuglede - Putnam theorem for closed $ M $-hyponormal (resp. closed subnormal) and subnormal (resp. closed $ M $-hyponormal) operators by the method of (\cite{stochel}).
 
\section{ Fuglede-Putnam Theorem}
 A closed subspace $ \mathcal{M} $ of $ \mathcal{H} $ is said to be \textit{invariant} under $ T \in \mathcal{L}(\mathcal{H})$
if for any $ x \in \mathcal{D}(T) \cap \mathcal{M}$, then $ Tx \in \mathcal{M}$. ie., if $ \mathcal{M} $ is a closed subspace of $ \mathcal{H} $,  we define $ T\vert_{ \mathcal{M}} $ is an operator on $ \mathcal{M} $ with domain $$ \mathcal{D}(T\vert_{ \mathcal{M}})= \{ x\in \mathcal{D}(T)\cap \mathcal{M}:Tx \in \mathcal{M}\} ~~\mbox{and} ~~
T\vert_{ \mathcal{M}}~~x=Tx,~~x\in \mathcal{D}(T\vert_{ \mathcal{M}}).$$
 If $ \mathcal{M} $ reduce $ T $ to an operator $ B, $ then $ B= T\vert_{\mathcal{M}}$. Throughout the paper we present known result as proposition. In the following result we show that part of closed densely defined $ M $- hyponormal operator is again $ M $-hyponormal.
\begin {lemma}\label{restriction}
Let $ T \in \mathcal{C}(\mathcal{H}) $ be a densely defined $M$-hyponormal operator and let $ \mathcal{M} $ be a closed subspace of $\mathcal{H}$ which is invariant under T.
Then $ T\vert_\mathcal{M} $ is a closed $M$-hyponormal operator.
\end{lemma}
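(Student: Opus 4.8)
The plan is to verify, one at a time, the three properties packaged into the definition of a closed densely defined $ M $-hyponormal operator for $ S := T\vert_\mathcal{M} $: closedness, density of $ \mathcal{D}(S) $ in $ \mathcal{M} $, and the defining norm inequality with the same constant $ M $. First I would dispose of closedness directly. If $ (x_n) \subseteq \mathcal{D}(S) $ satisfies $ x_n \to x $ and $ Sx_n \to y $, then since each $ x_n $ and each $ Sx_n = Tx_n $ lies in the closed subspace $ \mathcal{M} $, both limits $ x $ and $ y $ lie in $ \mathcal{M} $; since $ T $ is closed and $ Tx_n \to y $, we get $ x \in \mathcal{D}(T) $ with $ Tx = y $. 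Thus $ x \in \mathcal{D}(T) \cap \mathcal{M} $ and $ Tx = y \in \mathcal{M} $, so $ x \in \mathcal{D}(S) $ and $ Sx = y $, proving $ S $ closed.

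The core of the argument is to identify $ S^* $ on $ \mathcal{D}(S) $ via the orthogonal projection $ P $ of $ \mathcal{H} $ onto $ \mathcal{M} $. Since $ T $ is $ M $-hyponormal we have $ \mathcal{D}(S) \subseteq \mathcal{D}(T) \subseteq \mathcal{D}(T^*) $, so $ T^*x $ is defined for $ x \in \mathcal{D}(S) $. For such $ x $ and any $ y \in \mathcal{D}(S) \subseteq \mathcal{M} $ I would compute
\begin{align*}
\langle Sy, x \rangle = \langle Ty, x \rangle = \langle y, T^*x \rangle = \langle y, PT^*x \rangle ,
\end{align*}
the last equality holding because $ y \in \mathcal{M} $ and $ P $ is selfadjoint. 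As $ PT^*x \in \mathcal{M} $, this shows $ \mathcal{D}(S) \subseteq \mathcal{D}(S^*) $, and, once $ \mathcal{D}(S) $ is known to be dense in $ \mathcal{M} $ (so that the adjoint value is unique), that $ S^*x = PT^*x $ for every $ x \in \mathcal{D}(S) $.

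Granting this identification, the $ M $-hyponormal estimate for $ S $ follows from the contractivity of $ P $. Fix $ z \in \mathbb{C} $ and $ x \in \mathcal{D}(S) $. Using $ Px = x $ we have $ (S - zI)^*x = PT^*x - \bar{z}x = P(T - zI)^*x $, while $ (S - zI)x = (T - zI)x $. Hence
\begin{align*}
\| (S - zI)^*x \| = \| P(T - zI)^*x \| \leq \| (T - zI)^*x \| \leq M \, \| (T - zI)x \| = M \, \| (S - zI)x \| ,
\end{align*}
where the first inequality uses $ \|P\| \leq 1 $ and the second is the $ M $-hyponormality of $ T $. This is precisely the defining inequality for $ S $, with the same constant $ M $.

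I expect the real obstacle to be the density of $ \mathcal{D}(S) $ in $ \mathcal{M} $: it is needed both for $ S^* $ to be a genuine operator and to upgrade the inclusion $ \mathcal{D}(S) \subseteq \mathcal{D}(S^*) $ to the pointwise formula $ S^*x = PT^*x $. By contrast the closedness and the projection estimate are routine once this is in hand, so the argument will stand or fall on guaranteeing that $ \{ x \in \mathcal{D}(T) \cap \mathcal{M} : Tx \in \mathcal{M} \} $ is dense in $ \mathcal{M} $ for the invariant subspaces under consideration.
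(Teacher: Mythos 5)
Your core estimate is exactly the paper's proof: the paper writes $\| (T\vert_\mathcal{M}-\lambda I)^{*}x \| = \| P(T-\lambda I)^{*}x \| \leq M \|(T-\lambda I)x \| = M \| (T\vert_\mathcal{M}-\lambda I)x\|$ and stops there, so your approach is the same one. You are in fact more careful than the paper: you verify closedness of the restriction, you justify the key identity $(S-zI)^{*}x = P(T-zI)^{*}x$ by the adjoint computation $\langle Sy,x\rangle = \langle y, PT^{*}x\rangle$ (the paper simply asserts it), and you correctly flag that the density of $\mathcal{D}(T)\cap\mathcal{M}$ in $\mathcal{M}$ is what makes $S^{*}$ a well-defined operator and the formula $S^{*}x=PT^{*}x$ legitimate. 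That density question is not addressed in the paper's proof either, so the obstacle you identify is a genuine gap in the published argument rather than a defect peculiar to your write-up; as it stands, neither proof establishes it for a general closed invariant subspace.
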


\begin{proof}

We have $ \mathcal{D}(T\vert_\mathcal{M}) = \mathcal{D}(T)\cap \mathcal{M}$. Let $ x \in \mathcal{D}(T\vert_{\mathcal{M}}) $ and $ P $ be an orthogonal projection on to $ \mathcal{M} $. Then
\begin{align*} \parallel (T\vert_\mathcal{M}-\lambda I)^{*}x \parallel &= \parallel P(T-\lambda I)^{*}x \parallel \\
&\leq M \parallel(T-\lambda I)x \parallel \\
& = M \parallel (T\vert_\mathcal{M}-\lambda I)x\parallel.
\end{align*}
Hence $ T\vert_\mathcal{M} $ is a closed $M$-hyponormal operator.
\end{proof}

The next result for closed hyponormal operators has been studied in (\cite{stochel}) by Stochel. 
\begin{lemma}\label{contraction}

Let $ T \in \mathcal{C}(\mathcal{H}) $ be a densely defined $M$-hyponormal operator. Then there exist a contraction $ C_{\lambda} \in \mathcal{B}(\mathcal{H}) $ such that $ \dfrac{1}{M}(T - \lambda) \subseteq (T - \lambda)^{*} C_\lambda$ for every $ \lambda \in \mathbb{C} $.

\end{lemma}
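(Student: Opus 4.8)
The plan is to fix $\lambda \in \mathbb{C}$, abbreviate $S = T - \lambda$ (which is again closed and densely defined with $\mathcal{D}(S) = \mathcal{D}(T)$ and $S^* = T^* - \overline{\lambda}$), and to produce the desired contraction as a scalar multiple of the adjoint of an auxiliary bounded operator built directly from the $M$-hyponormal inequality $\|S^*x\| \leq M\|Sx\|$.

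First I would define a map $W$ on $ran(S)$ by $W(Sx) = S^*x$ for $x \in \mathcal{D}(S)$. The $M$-hyponormal inequality is exactly what makes this well defined: if $Sx = Sy$ then $S(x-y)=0$, whence $\|S^*(x-y)\| \leq M\|S(x-y)\| = 0$, so $S^*x = S^*y$. The same inequality gives $\|W(Sx)\| = \|S^*x\| \leq M\|Sx\|$, so $W$ is bounded by $M$ on $ran(S)$; extending it continuously to $\overline{ran(S)}$ and by zero on $\overline{ran(S)}^{\perp} = ker(S^*)$ yields $W \in \mathcal{B}(\mathcal{H})$ with $\|W\| \leq M$. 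By construction $WSx = S^*x$ for every $x \in \mathcal{D}(S)$, and since $M$-hyponormality forces $\mathcal{D}(S) = \mathcal{D}(T) \subseteq \mathcal{D}(T^*) = \mathcal{D}(S^*)$, this reads $WS \subseteq S^*$.

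The decisive step is to pass to adjoints. Because $W$ is bounded and everywhere defined while $S$ is closed and densely defined, I can invoke the product rule $(WS)^* = S^*W^*$ together with the fact that taking adjoints reverses inclusions, so from $WS \subseteq S^*$ I obtain $(S^*)^* \subseteq (WS)^* = S^*W^*$. Since $S$ is closed and densely defined, $(S^*)^* = S$, hence $S \subseteq S^*W^*$. Setting $C_\lambda = \frac{1}{M}W^*$, which is a contraction because $\|W^*\| = \|W\| \leq M$, gives $\frac{1}{M}S \subseteq S^*C_\lambda$, that is, $\frac{1}{M}(T-\lambda) \subseteq (T-\lambda)^*C_\lambda$, as required.

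I expect the main obstacle to be the adjoint bookkeeping in the last paragraph: one must check that $S^*$ is densely defined (guaranteed since the closed operator $T$ is closable, so $T^*$ is densely defined) in order that $(S^*)^*$ make sense and equal $S$, and one must apply the product rule in the correct unbounded-times-bounded order, where \emph{equality} $(WS)^* = S^*W^*$ — not merely an inclusion — holds precisely because $W$ is bounded on all of $\mathcal{H}$. Everything else reduces to the routine verification of the well-definedness and boundedness of $W$.
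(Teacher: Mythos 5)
Your argument is essentially the paper's own proof: both construct the bounded intertwiner on $ran(T-\lambda)$ from the $M$-hyponormal inequality (the paper folds the factor $\frac{1}{M}$ into the map $K$ so that it is already a contraction, whereas you keep $\|W\|\leq M$ and divide by $M$ only at the end), extend it by continuity and by zero on the orthogonal complement, and pass to adjoints to reverse the inclusion. Your write-up is in fact slightly more careful than the paper's, since you make explicit the well-definedness of the map on the range and the identity $(WS)^{*}=S^{*}W^{*}$ that justifies the final ``therefore.''
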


\begin{proof}
Define $ K :ran(T - \lambda)\rightarrow ran(T^* - \overline{\lambda}) $ by \begin{align*}
K \left((T - \lambda)x \right)= \dfrac{1}{M}(T^{*} - \overline{\lambda})x, ~~\mbox{for all}  ~~~x\in \mathcal{D}(T).
\end{align*}  Since $ T $ is $ M $-hyponormal, $ K $ is a contraction  with $ K(T - \lambda)\subseteq \dfrac{1}{M}(T^{*} - \overline{\lambda}).$ Now we extend $ K $ to $ K^{\prime} \in \mathcal{B}\left(\overline{ran(T - \lambda)},\overline{ran(T^* - \overline{\lambda})}\right)$ such that \\ $ K^{\prime}(T - \lambda)\subseteq \dfrac{1}{M}(T^{*} - \overline{\lambda})$. Then the contraction   $ A \in \mathcal{B}(\mathcal{H}) $ defined by $ Ax =0~~ $ for all $ x \in {\overline{ran(T - \lambda)}}^{\perp} $ is an extension of $ K^{\prime} $. Hence $$ A(T - \lambda)\subseteq \dfrac{1}{M}(T^{*} - \overline{\lambda}).$$ Therefore, we get $$ \dfrac{1}{M}(T - \lambda)\subseteq (T - \lambda)^{*}A^*.$$ If we put $ A^* = C_\lambda, $ then $ \dfrac{1}{M}(T - \lambda) \subseteq (T - \lambda)^{*} C_\lambda.$
\end{proof}
 A closed subspace $ \mathcal{M} $ of $ \mathcal{H} $ \textit{reduces} $ T \in \mathcal{C}(\mathcal{H}) $ if $ \mathcal{M} $ and $ \mathcal{M^{\perp}} $ are invariant under $ T $. Stochel (\cite{stochel}) proved  if $T \in \mathcal{C}(\mathcal{H})$ is hyponormal and $ \mathcal{M}$ is a closed subspace of $\mathcal{H}$ which is invariant under $ T $ with $ T\vert_\mathcal{M} $ is normal, then $ \mathcal{M} $ reduces $ T. $ Now we prove the result for closed densely defined $M$-hyponormal operators. 

\begin{theorem} \label{reduces}
Suppose $ T \in \mathcal{C}(\mathcal{H}) $ is a densely defined $M$-hyponormal operator. If $ \mathcal{M}$ is a closed subspace of $\mathcal{H}$ which is invariant under $ T $ with $ T\vert_\mathcal{M} $ is normal, then $ \mathcal{M} $ reduces $ T. $
\end{theorem}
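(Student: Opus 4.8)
The plan is to show that $\mathcal{M}^{\perp}$ is invariant under $T$; combined with the standing hypothesis that $\mathcal{M}$ is invariant, this is exactly the assertion that $\mathcal{M}$ reduces $T$, after which $T\vert_{\mathcal{M}}$ is the prescribed normal operator. Write $N=T\vert_{\mathcal{M}}$ and let $P$ be the orthogonal projection of $\mathcal{H}$ onto $\mathcal{M}$. Since $N$ is normal it is densely defined in $\mathcal{M}$, so $\mathcal{D}(N)=\mathcal{D}(T)\cap\mathcal{M}$ is dense in $\mathcal{M}$. First I would record the reduction that it suffices to prove $T^{*}x\in\mathcal{M}$ for every $x\in\mathcal{D}(N)$: indeed, for $w\in\mathcal{D}(T)\cap\mathcal{M}^{\perp}$ and $x\in\mathcal{D}(N)$ one has $\langle Tw,x\rangle=\langle w,T^{*}x\rangle=0$, and since $\mathcal{D}(N)$ is dense in $\mathcal{M}$ this forces $Tw\in\mathcal{M}^{\perp}$.

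The heart of the argument is a pointwise identity coming from the normality of $N$. For $x\in\mathcal{D}(N)$ (recall $\mathcal{D}(T)\subseteq\mathcal{D}(T^{*})$, so $T^{*}x$ is defined) and any $u\in\mathcal{D}(N)$ one computes $\langle u,(T-\lambda)^{*}x\rangle=\langle(T-\lambda)u,x\rangle=\langle(N-\lambda)u,x\rangle=\langle u,(N-\lambda)^{*}x\rangle$, the middle equality holding because $\mathcal{M}$ is invariant. As $\mathcal{D}(N)$ is dense in $\mathcal{M}$ and $(N-\lambda)^{*}x\in\mathcal{M}$, this gives $P(T-\lambda)^{*}x=(N-\lambda)^{*}x$, whence by normality $\parallel P(T-\lambda)^{*}x\parallel=\parallel(N-\lambda)x\parallel=\parallel(T-\lambda)x\parallel$. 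Feeding this into the $M$-hyponormal inequality $\parallel(T-\lambda)^{*}x\parallel\le M\parallel(T-\lambda)x\parallel$ and using the Pythagorean theorem together with $(I-P)x=0$ yields, for every $\lambda\in\mathbb{C}$,
\[
\parallel(I-P)T^{*}x\parallel^{2}\le (M^{2}-1)\,\parallel(N-\lambda)x\parallel^{2}.
\]
When $M=1$ this immediately gives $(I-P)T^{*}x=0$, i.e. $T^{*}x\in\mathcal{M}$, recovering Stochel's hyponormal case and finishing the proof.

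The main obstacle is the genuinely $M$-hyponormal range $M>1$: minimising the right-hand side over $\lambda$ only bounds $\parallel(I-P)T^{*}x\parallel^{2}$ by $(M^{2}-1)$ times $\inf_{\lambda}\parallel(N-\lambda)x\parallel^{2}=\operatorname{dist}(Nx,\mathbb{C}x)^{2}$, which vanishes only when $x$ is an eigenvector of $N$. To globalise I would pass to the spectral measure $E$ of the normal operator $N$ and localise: for a bounded Borel set $\Delta$ the subspace $E(\Delta)\mathcal{H}\subseteq\mathcal{M}$ is invariant and $N\vert_{E(\Delta)\mathcal{H}}$ is bounded normal, so by Lemma \ref{restriction} the estimate restricts to this band; I would then invoke the bounded asymmetric Fuglede--Putnam theorem for $M$-hyponormal operators (Moore \cite{moore}), using the inclusion $E(\Delta)\mathcal{H}\hookrightarrow\mathcal{H}$ as intertwiner, to conclude $(I-P)T^{*}E(\Delta)=0$, and letting $\Delta\uparrow\mathbb{C}$ would give $(I-P)T^{*}x=0$ on all of $\mathcal{D}(N)$.

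I expect the delicate point to be precisely this $M>1$ step. A naive summation of the band estimates over a partition of $\sigma(N)$ fails, because the vectors $(I-P)T^{*}E(\Delta_{j})x$ need not be mutually orthogonal and the triangle inequality destroys the gain in the radius of $\Delta_{j}$; so one genuinely needs a Fuglede--Putnam-type input (equivalently, a resolvent-growth or Rosenblum-type argument for the $M$-hyponormal operator $T$) rather than the elementary inequality displayed above.
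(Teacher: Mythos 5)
Your reduction to showing $T^{*}x\in\mathcal{M}$ for $x\in\mathcal{D}(T)\cap\mathcal{M}$, and the identity $P(T-\lambda)^{*}x=(N-\lambda)^{*}x$ leading to $\parallel(I-P)T^{*}x\parallel^{2}\le(M^{2}-1)\parallel(N-\lambda)x\parallel^{2}$, are both correct; but, as you yourself observe, this closes the argument only when $M=1$, i.e.\ in Stochel's hyponormal case, which is not the theorem being proved. The fallback you sketch for $M>1$ does not work as stated: Moore's asymmetric Fuglede--Putnam theorem is a statement about \emph{bounded} $M$-hyponormal operators, whereas here $T$ is unbounded, and the unbounded intertwining result you would need (normal $S$, unbounded $M$-hyponormal $T$, $AS\subseteq TA$) is essentially Theorem \ref{main} of the paper --- which, moreover, does not by itself yield that the invariant subspace reduces $T$; the paper has to invoke Theorem \ref{reduces} precisely to make that upgrade in its final result. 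So the proposal has a genuine gap at the one point where the theorem goes beyond the hyponormal case.

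The idea you are missing is to trade the quantitative estimate for a qualitative range inclusion, which is insensitive to the size of $M$. Lemma \ref{contraction} gives $\frac{1}{M}(T-\lambda)\subseteq(T-\lambda)^{*}C_{\lambda}$ with $C_{\lambda}$ a contraction, hence $ran(T-\lambda)\subseteq ran\bigl((T-\lambda)^{*}\bigr)$ for every $\lambda\in\mathbb{C}$. Writing $T$ as an upper triangular block matrix $\begin{bmatrix} T_{11}&T_{12}\\ 0&T_{22}\end{bmatrix}$ with respect to $\mathcal{M}\oplus\mathcal{M}^{\perp}$ (so that $(T-\lambda)^{*}$ is lower triangular) and comparing the $\mathcal{M}$-components of $(T-\lambda)y$ for $y\in\mathcal{D}(T)\cap\mathcal{M}^{\perp}$, one finds $T_{12}y\in ran\bigl((T_{11}-\lambda)^{*}\bigr)=ran(T_{11}-\lambda)$ for every $\lambda$, the last equality by normality of $T_{11}=T\vert_{\mathcal{M}}$. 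Putnam's theorem on ranges of normal operators then gives $\bigcap_{\lambda\in\mathbb{C}}ran(T_{11}-\lambda)=\{0\}$, so $T_{12}=0$ and $\mathcal{M}$ reduces $T$. Your displayed inequality is in effect the $\lambda$-by-$\lambda$ shadow of this argument; the paper's proof succeeds because it intersects the ranges over all $\lambda$ instead of minimising a norm over a single $\lambda$.
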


\begin{proof}
Let $ \mathcal{H}=\mathcal{H}_{1} \oplus\mathcal{H}_{2},$ where $ \mathcal{H}_{1}=\mathcal{M} $ and $ \mathcal{H}_{2} = \mathcal{M}^{\perp}.$ Then $ T $ has the block matrix representation  $$ T = \begin{bmatrix}
T_{11}& T_{12}\\
T_{21}& T_{22}
\end{bmatrix},$$ where $ T_{ij}:D(T)\cap \mathcal{H}_{j}\rightarrow  \mathcal{H}_{i} $ is defined by $T_{ij}=P_{\mathcal{H}_{i}} T P_{\mathcal{H}_{j}}\vert_{D(T)\cap \mathcal{H}_{j}}  $ for $ j=1,2. $ Here, $ P_{\mathcal{H}_{i}} $ denotes the orthogonal projection onto $ \mathcal{H}_{i}. $ Since $ \mathcal{M} $ is  invariant under $ T $, we have \\$$ T = \begin{bmatrix}
T_{11}& T_{12}\\
0& T_{22}
\end{bmatrix}.$$ \\Let $ y\in D(T)\cap \mathcal{M}^{\perp}. $ By Lemma \ref{contraction}, we have $$ \dfrac{1}{M}(T - \lambda) \subseteq (T - \lambda)^{*} C_\lambda$$ for every $ \lambda \in \mathbb{C} $. Thus, $ ran(T-\lambda)\subseteq ran(T- \lambda)^{*} $ for every $ \lambda \in \mathbb{C}.$ Then there exist a densely defined operator $ B $ such that $ (T-\lambda)=(T-\lambda)^{*}B $ ( see \cite{doug}). Hence, $ T_{12}(y)  =(T_{11} -\lambda)^{*}u $ for some $ u \in \mathcal{M}.$ We can choose $ v $ such that $ (T_{11}-\lambda)^{*}u = (T_{11}-\lambda)v.$ Therefore, $ T_{12}(y) = (T_{11}-\lambda)v $ for every $ \lambda \in \mathbb{C}.$ Hence, $$ T_{12}(y)\in \bigcap \limits_{\lambda \in \mathbb{C}}ran(T_{11}-\lambda).$$ Thus, $ T_{12}(y) = 0$ (\cite{putnam}) and so $ T_{12} = 0$.
\end{proof}

\begin{proposition}(\cite{stochel})\label{core}
Let $\mathcal{M} $ be a core for  $ T \in \mathcal{C}(\mathcal{H}) $ and  $ A \in \mathcal{B}(\mathcal{H})$ be a selfadjoint operator with $ ker(A) =\{0\}. $ If $ AT \subseteq TA $, then $A(\mathcal{M})$ is a core for $ T $.
\end{proposition}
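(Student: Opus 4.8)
The plan is to lift the whole problem into the graph space $\mathcal{G}(T)$, where the hypothesis $AT\subseteq TA$ converts the desired density into a routine fact about the range of an injective self-adjoint operator. First I would unwind what $AT\subseteq TA$ means: since $A$ is everywhere defined and bounded, $\mathcal{D}(AT)=\mathcal{D}(T)$, so the inclusion says precisely that $Ax\in\mathcal{D}(T)$ and $TAx=ATx$ for every $x\in\mathcal{D}(T)$. In particular $A(\mathcal{M})\subseteq\mathcal{D}(T)$, so $T\vert_{A(\mathcal{M})}$ is meaningful, and for $m\in\mathcal{M}$ one has $(Am,T(Am))=(Am,ATm)$.

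Next I would pass to $\mathcal{H}\oplus\mathcal{H}$ equipped with the bounded self-adjoint operator $\mathbf{A}=A\oplus A$, which is injective because $ker(A)=\{0\}$. Writing $G=\mathcal{G}(T)$, a closed subspace of $\mathcal{H}\oplus\mathcal{H}$ since $T$ is closed, the intertwining relation gives $\mathbf{A}(x,Tx)=(Ax,ATx)=(Ax,T(Ax))\in G$, that is, $\mathbf{A}(G)\subseteq G$. Because $\mathbf{A}$ is self-adjoint, invariance of $G$ forces invariance of $G^{\perp}$ (for $w\in G^{\perp}$ and $g\in G$ we have $\langle \mathbf{A}w,g\rangle=\langle w,\mathbf{A}g\rangle=0$, so $\mathbf{A}w\in G^{\perp}$); hence $G$ reduces $\mathbf{A}$ and the restriction $\mathbf{A}\vert_{G}$ is a bounded self-adjoint operator on the Hilbert space $G$.

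Now comes the core argument. Since $\mathbf{A}\vert_{G}$ is self-adjoint, $\overline{ran(\mathbf{A}\vert_{G})}=ker(\mathbf{A}\vert_{G})^{\perp}$, and $ker(\mathbf{A}\vert_{G})=G\cap ker(\mathbf{A})=\{0\}$ by injectivity of $\mathbf{A}$; therefore $\mathbf{A}(G)$ is dense in $G$. Finally I would transport this density down to $A(\mathcal{M})$. One checks the bookkeeping identity $\mathcal{G}(T\vert_{A(\mathcal{M})})=\{(Am,T(Am)):m\in\mathcal{M}\}=\mathbf{A}\bigl(\mathcal{G}(T\vert_{\mathcal{M}})\bigr)$. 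Since $\mathcal{M}$ is a core, $\mathcal{G}(T\vert_{\mathcal{M}})$ is dense in $G$, and continuity of $\mathbf{A}$ yields $\overline{\mathbf{A}(\mathcal{G}(T\vert_{\mathcal{M}}))}=\overline{\mathbf{A}(G)}=G$. Thus $\mathcal{G}(T)\subseteq\overline{\mathcal{G}(T\vert_{A(\mathcal{M})})}$, which is exactly the assertion that $A(\mathcal{M})$ is a core for $T$.

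The step I expect to be the genuine crux is the passage from ``$\mathbf{A}$ self-adjoint with $G$ invariant'' to ``$G$ reduces $\mathbf{A}$'', since this is what promotes $\mathbf{A}\vert_{G}$ to a self-adjoint operator and unlocks the clean criterion $\overline{ran}=ker^{\perp}$; everything else (the graph-space reformulation and the continuity argument carrying density from $G$ to $\mathbf{A}(G)$) is formal once this is in place. I would also watch the identity $\mathcal{G}(T\vert_{A(\mathcal{M})})=\mathbf{A}(\mathcal{G}(T\vert_{\mathcal{M}}))$, which silently invokes $TAm=ATm$ and is the one point where the hypothesis $AT\subseteq TA$ is actually consumed.
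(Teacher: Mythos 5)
The paper offers no proof of this proposition --- it is quoted as a known result from Stochel's paper --- so there is nothing internal to compare your argument against. Your graph-space proof is correct in every step (the inclusion $\mathbf{A}(\mathcal{G}(T))\subseteq\mathcal{G}(T)$ with $\mathbf{A}=A\oplus A$, the automatic passage from invariance to reduction for a bounded self-adjoint operator, the density $\overline{\mathbf{A}(\mathcal{G}(T))}=\mathcal{G}(T)$ from $\overline{ran}=ker^{\perp}$ and injectivity, and the transport of density by continuity), and it is essentially the standard argument underlying Stochel's original lemma.
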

 Now we extend the Theorem 2.3 of (\cite{stochel}) from closed hyponormal to closed $M$-hyponormal operator by the similar argument as in the proof of (\cite[Theorem 2.3]{stochel}).
\begin{theorem}\label{specmeasure}

Let $ S \in \mathcal{C}(\mathcal{H}) $ be  normal. Let $ T \in \mathcal{C}(\mathcal{K}) $ be $M$-hyponormal. If $ A \in \mathcal{B}(\mathcal{H},\mathcal{K}) $ be such that $ AS \subseteq TA $. Then \\
(i) $ |A|S \subseteq S|A|$ \\
(ii)  If $ A\geq 0, ker(A)= \{0 \} $ and $ \mathcal{K} = \mathcal{H} $, then $ S = T $.

\end{theorem}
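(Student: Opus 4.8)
The plan is to prove (i) by showing that the bounded positive operator $A^{*}A$ commutes with $S$ and then transferring this to $|A|=(A^{*}A)^{1/2}$ by functional calculus, while (ii) will follow quickly from (i) together with Proposition \ref{core} and the domain inclusion built into $M$-hyponormality. The whole difficulty is concentrated in a single asymmetric Fuglede–Putnam step; everything else is bookkeeping with adjoints of products of a bounded and a closed operator.

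For (i) the one nontrivial input is the companion intertwining
\[
AS^{*}\subseteq T^{*}A,
\]
whose adjoint (using that $A$ is bounded, so $(AS^{*})^{*}=SA^{*}$, together with $A^{*}T\subseteq(T^{*}A)^{*}$) yields $A^{*}T\subseteq SA^{*}$. Granting this, for $x\in\mathcal D(S)$ one has $Ax\in\mathcal D(T)$, and since $A^{*}$ is everywhere defined,
\[
A^{*}ASx=A^{*}(TAx)=A^{*}T(Ax)=SA^{*}(Ax)=SA^{*}Ax,
\]
so $A^{*}AS\subseteq SA^{*}A$. Because $A^{*}A$ is bounded and self-adjoint with $\sigma(A^{*}A)$ compact, Weierstrass approximation provides polynomials $p_{n}$ with $p_{n}(A^{*}A)\to|A|$ in operator norm; each satisfies $p_{n}(A^{*}A)S\subseteq Sp_{n}(A^{*}A)$, and letting $n\to\infty$ while using that $S$ is closed gives $|A|S\subseteq S|A|$, which is (i).

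The main obstacle is therefore establishing $AS^{*}\subseteq T^{*}A$, i.e. the asymmetric Fuglede–Putnam conclusion for the normal $S$ and the $M$-hyponormal $T$ in the unbounded setting. I would reduce to a position where the bounded $M$-hyponormal theorem of Moore (\cite{moore}) applies by cutting down with the spectral measure $E$ of $S$. Writing $P_{n}=E(\{z:|z|\le n\})$ and $S_{n}=SP_{n}$ (a bounded normal operator, with $P_{n}\to I$ strongly and $P_{n}$ reducing $S$), the hypothesis $AS\subseteq TA$ gives, for $B_{n}:=AP_{n}\in\mathcal B(\mathcal H,\mathcal K)$,
\[
TB_{n}=ASP_{n}=AS_{n}=B_{n}S_{n},
\]
so $B_{n}$ intertwines the bounded normal $S_{n}$ with $T$, the subspace $\operatorname{ran}(B_{n})$ is $T$-invariant, and $TB_{n}$ is bounded. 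On this invariant part one invokes the bounded asymmetric Fuglede–Putnam theorem for $M$-hyponormal operators, the $M$-hyponormal estimate being furnished by Lemma \ref{contraction}, to obtain $T^{*}B_{n}=B_{n}S_{n}^{*}$. Finally, for $x\in\mathcal D(S)=\mathcal D(S^{*})$ one has $B_{n}x=AP_{n}x\to Ax$ and $T^{*}B_{n}x=B_{n}S_{n}^{*}x\to AS^{*}x$, so closedness of $T^{*}$ forces $Ax\in\mathcal D(T^{*})$ and $T^{*}Ax=AS^{*}x$, that is $AS^{*}\subseteq T^{*}A$. Making this reduction and the limiting argument rigorous — in particular controlling the unboundedness of $T$ on $\operatorname{ran}(B_{n})$ — is the genuinely delicate part.

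For (ii), assume $A\ge0$, $\ker A=\{0\}$ and $\mathcal K=\mathcal H$. Then $A=|A|$, so (i) reads $AS\subseteq SA$, while by hypothesis $AS\subseteq TA$; hence $SAx=TAx$ for every $x\in\mathcal D(S)$, i.e. $S$ and $T$ agree on $A(\mathcal D(S))$. Since $A$ is bounded, self-adjoint, injective and commutes with $S$, and $\mathcal D(S)$ is a core for $S$, Proposition \ref{core} shows that $A(\mathcal D(S))$ is again a core for $S$; as $T$ is closed and coincides with $S$ on this core, we conclude $S\subseteq T$. Taking adjoints gives $T^{*}\subseteq S^{*}$, so $\mathcal D(T^{*})\subseteq\mathcal D(S^{*})=\mathcal D(S)$, and the $M$-hyponormality of $T$ (which forces $\mathcal D(T)\subseteq\mathcal D(T^{*})$) yields $\mathcal D(T)\subseteq\mathcal D(S)$. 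Combined with $\mathcal D(S)\subseteq\mathcal D(T)$ from $S\subseteq T$, this gives $\mathcal D(S)=\mathcal D(T)$ and therefore $S=T$.
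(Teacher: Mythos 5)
Your part (ii) follows the paper's argument almost verbatim and is fine once (i) is granted. The genuine problem is part (i): you have made the whole proof rest on the intertwining $AS^{*}\subseteq T^{*}A$, which is (up to adjoints) the asymmetric Fuglede--Putnam conclusion itself --- the very thing the paper is building toward, and which it only obtains later (Theorem \ref{main} together with Proposition \ref{stochel}(ii)) \emph{using} Theorem \ref{specmeasure}(i) as an ingredient. Your proposed derivation of $AS^{*}\subseteq T^{*}A$ by truncating to $S_{n}=SE(\{z:|z|\le n\})$ and invoking the bounded $M$-hyponormal Fuglede--Putnam theorem does not close this circle. From $TB_{n}=B_{n}S_{n}$ you only learn that the composite $TB_{n}$ is bounded: the estimate $\|TB_{n}x\|\le\|B_{n}\|\,\|S_{n}\|\,\|x\|$ controls $\|Ty\|$ for $y=B_{n}x$ in terms of $\|x\|$, not of $\|y\|$, and $B_{n}$ need not be bounded below, so $T$ restricted to $\overline{ran(B_{n})}$ is still unbounded in general; even the $T$-invariance of the closed subspace $\overline{ran(B_{n})}$ is not immediate, since $T$ is closed but not continuous. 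Hence Moore's bounded theorem does not apply, and what you would actually need is an unbounded asymmetric Fuglede--Putnam statement --- precisely what is at stake. You flag this as ``the genuinely delicate part'' but do not supply it, so the proof of (i) is incomplete.

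For comparison, the paper proves (i) directly without ever knowing $AS^{*}\subseteq T^{*}A$. For a compact set $\Omega$ and $x\in ran(E(\Omega))$ it forms $\psi(\lambda)=\int_{\Omega}(z-\lambda)^{-1}E(dz)x$ for $\lambda\notin\Omega$, so that $Ax=A(S-\lambda)\psi(\lambda)=(T-\lambda)A\psi(\lambda)$; Lemma \ref{contraction} gives $\frac{1}{M}(T-\lambda)\subseteq(T-\lambda)^{*}C_{\lambda}$, and the adjoint relation $A^{*}(T-\lambda)^{*}\subseteq(S-\lambda)^{*}A^{*}$ then places $A^{*}Ax$ in $\bigcap_{z\in\mathbb{C}\setminus\Omega^{*}}ran(S^{*}-z)$, whence $E(\mathbb{C}\setminus\Omega)A^{*}Ax=0$ by \cite[Theorem 2.2]{stochel}. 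Thus $A^{*}A$ leaves $ran(E(\Omega))$ invariant, hence commutes with $E(\Omega)$ and with $|A|$. Your final reduction --- passing from $A^{*}AS\subseteq SA^{*}A$ to $|A|S\subseteq S|A|$ by polynomial approximation of the square root and closedness of $S$ --- is correct and would work, but the commutation $A^{*}AS\subseteq SA^{*}A$ must be obtained by an argument of the above type rather than from the unproved full Fuglede--Putnam conclusion.
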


\begin{proof}
(i) Suppose $ T $ is $ M $-hyponormal. Then by Lemma \ref{contraction}, we have \begin{equation}\label{eqn1}
 \dfrac{1}{M}(T -\lambda) \subseteq  (T-\lambda)^{*} C_\lambda ~~ \mbox{for all} ~~\lambda \in \mathbb{C}.
\end{equation}
 Let $ E $ be the spectral measure of $ S $ and let $ \Omega $ be a compact subset of $ \mathbb{C}. $ Since $ E $ is regular, it is sufficient to prove that  $ |A|E(\Omega)=E(\Omega)|A| $ for every compact set $ \Omega $ of $\mathbb{C}.$
 Since $ S $ is normal, we have $ ran(E(\Omega)) $ reduces $ S-\lambda $ and $  ran(E(\Omega)) \subset \mathcal{D}( S -\lambda) $ for every $ \lambda \in \mathbb{C}$.\\ For $ \lambda \notin  \Omega, $ define the function $ \psi(\lambda)= \int_{\Omega} \dfrac{1}{( z -\lambda)} E(dz)x $\\ Then $ Ax= A( S -\lambda)\psi(\lambda),~\mbox{for} ~~\lambda \notin \Omega.$
Since $ AS \subseteq TA, $ we have $Ax=(T -\lambda)A\psi(\lambda),~\mbox{for} ~~\lambda \notin \Omega.$
Hence by equation (\ref{eqn1}), $ A^*Ax = A^*M (T -\lambda)^{*} C_\lambda A\psi(\lambda).$
Since $ A( S -\lambda)\subseteq (T -\lambda)A,$ for $ \lambda \in \mathbb{C},$
 $ A^*Ax =(S -\lambda)^*MA^*C_{\lambda}A\psi(\lambda),~\mbox{for} ~~\lambda \notin \Omega$ and hence $ A^{*} Ax \in \bigcap \limits_{z \in \mathbb{C} \setminus \Omega^{*}}ran(S^* - z),$ where $ \Omega^{*}=\{z:\overline{z}\in \Omega\}.$ Then $ E(\mathbb{C}\setminus \Omega)A^* Ax =0 $ (\cite[Theorem 2.2]{stochel}). Therefore, $ A^* Ax =E(\Omega)A^* Ax. $ Since $ x\in  ran(E(\Omega))$ is arbitrary, $$A^*A( ran(E(\Omega)))\subseteq  ran(E(\Omega)).$$ Since $ A^* A $ is selfadjoint, $ A^*A E(\Omega)= E(\Omega)A^*A. $ This completes the proof.\\

(ii) Since $ A\geq 0, $ from (i) we have $ SAx=ASx=TAx $ for $ x\in \mathcal{D}(S).$ Thus, $ S|_{A\mathcal{D}(S)} \subseteq T. $ Since $ \mathcal{D} $  is a core for $ S ,$ $ A\mathcal{D}(S) $ is a core for $ S $ from Proposition \ref{core}. Hence, $$ \mathcal{G}(S)\subseteq \overline {\mathcal{G}(S|_{A\mathcal{D}(S)})} \subseteq \overline{\mathcal{G}(T)} = \mathcal{G}(T).$$ Therefore, $ S \subseteq T $. Since $ D(T)\subseteq D(T^*) $, we have $ D(T)\subseteq D(T^*)\subseteq D(S^*)=D(S). $ Hence, $ S=T. $

\end{proof}

Let $ A \in \mathcal{B}(\mathcal{H},\mathcal{K}), $ we denotes $ \mathcal{R}(A^*):= \overline{ran(A^{*})}= \overline {ran( \vert A \vert)} $ and $\mathcal{R}(A) := \overline{ran(A)}.$ Let $ A = U ~~\vert A \vert $ be the polar decomposition of $ A, $ where $\vert A \vert = (A^{*}A)^{\frac{1}{2}}  $, $ U \in \mathcal{B}(\mathcal{H},\mathcal{K}) $ is partial isometry with initial space $\mathcal{R}(A^*)$ and final space $\mathcal{R}(A)$. Also $ ker(U)=ker(A) $ and $ U \vert_{\mathcal{R}(A^*)}  , A\vert_{\mathcal{R}(A^*)} $ are in  $\mathcal{B} (\mathcal{R}(A^*),\mathcal{R}(A))$. Also $ U \vert_{\mathcal{R}(A^*)} $ is a bounded unique unitary isomorphism from $ \mathcal{R}(A^*) $ into $\mathcal{R}(A)$ with $$ U\vert_{\mathcal{R}(A^*)}~~ |A|\,x~=~A\,x, x\in \mathcal{H}. $$



\begin{proposition}(\cite{stochel},Theorem 3.2)\label{stochel}

Let $ T $ and $ B $ be a closed densely defined operators in $\mathcal{H},$ and $ \mathcal{K}$ respectively, and let $ A\in \mathcal{B}(\mathcal{H},\mathcal{K}) $ be such that
$ AT^* \subseteq BA. $\\ (i) If $ \mathcal{R}(A^*) $ reduces $ T, $ then $ B|_{\mathcal{R}(A)} $ is closed densely defined operator in $ \mathcal{R}(A)$ and $$ A \vert_{\mathcal{R}(A^*)} ~~(T \vert_{\mathcal{R}(A^*)})^{*}\subseteq B\vert_{\mathcal{R}(A)}~~ A \vert_{\mathcal{R}(A^*)}.$$\newline
(ii) If $ \mathcal{R}(A^*) $ and $ \mathcal{R}(A) $ reduces $ T $ and $ B $ to normal operators respectively, then \\ $$ AT \subseteq B^*A, ~~ |A|T \subseteq T|A|,~~ |A^*|B \subseteq B |A^*|,$$ $$ (T \vert_{\mathcal{R}(A^*)})^{*}= (U\vert_{\mathcal{R}(A^*)})^{*} B\vert_{\mathcal{R}(A)} ~~U\vert_{\mathcal{R}(A^*)}.$$
\end{proposition}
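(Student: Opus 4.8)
The plan is to set $\mathcal{M} = \mathcal{R}(A^*)$ and $\mathcal{N} = \mathcal{R}(A)$ and to exploit the polar decomposition $A = U|A|$ throughout. Since $|A|$ is self-adjoint, $\mathcal{M} = \overline{ran(|A|)} = (ker(|A|))^{\perp}$ reduces $|A|$, while $A$ annihilates $\mathcal{M}^{\perp} = ker(A)$. Writing $V = U\vert_{\mathcal{M}}$ for the unitary isomorphism of $\mathcal{M}$ onto $\mathcal{N}$ and $|A|_{0} = |A|\vert_{\mathcal{M}}$ for the positive injective operator with dense range in $\mathcal{M}$, I would use the factorization $A\vert_{\mathcal{M}} = V|A|_{0}$. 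Every identity is then obtained by first restricting to $\mathcal{M}$ (resp. $\mathcal{N}$), invoking Theorem \ref{specmeasure}, and lifting back to $\mathcal{H}$ (resp. $\mathcal{K}$) using that $A$ and $|A|$ kill the orthogonal complements.

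For (i), I would take $x \in \mathcal{D}(T^{*}) \cap \mathcal{M} = \mathcal{D}((T\vert_{\mathcal{M}})^{*})$. Because $\mathcal{M}$ reduces $T$, we have $(T\vert_{\mathcal{M}})^{*}x = T^{*}x \in \mathcal{M}$, so $A(T\vert_{\mathcal{M}})^{*}x = A\vert_{\mathcal{M}}(T\vert_{\mathcal{M}})^{*}x$. The hypothesis $AT^{*} \subseteq BA$ then gives $A\vert_{\mathcal{M}}(T\vert_{\mathcal{M}})^{*}x = BAx = BA\vert_{\mathcal{M}}x$, and since both $A\vert_{\mathcal{M}}x$ and its image lie in $\mathcal{N}$, this says $A\vert_{\mathcal{M}}x \in \mathcal{D}(B\vert_{\mathcal{N}})$ and $A\vert_{\mathcal{M}}(T\vert_{\mathcal{M}})^{*}x = B\vert_{\mathcal{N}}A\vert_{\mathcal{M}}x$, which is the claimed inclusion. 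Closedness of $B\vert_{\mathcal{N}}$ follows because its graph is $\mathcal{G}(B) \cap (\mathcal{N} \times \mathcal{N})$, an intersection of closed sets; dense definedness follows because $A\vert_{\mathcal{M}}$ has dense range in $\mathcal{N}$ and $\mathcal{D}((T\vert_{\mathcal{M}})^{*})$ is dense in $\mathcal{M}$, so the inclusion forces $\mathcal{D}(B\vert_{\mathcal{N}})$ to contain the dense set $A\vert_{\mathcal{M}}(\mathcal{D}((T\vert_{\mathcal{M}})^{*}))$.

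For (ii), I would substitute $A\vert_{\mathcal{M}} = V|A|_{0}$ into the inclusion from (i) and cancel the unitary $V$ to obtain $|A|_{0}(T\vert_{\mathcal{M}})^{*} \subseteq \tilde{B}|A|_{0}$, where $\tilde{B} := V^{*}B\vert_{\mathcal{N}}V$ is normal, being unitarily equivalent to the normal operator $B\vert_{\mathcal{N}}$, and $(T\vert_{\mathcal{M}})^{*}$ is normal. Regarding $|A|_{0}$ as the bounded intertwiner between the normal operator $(T\vert_{\mathcal{M}})^{*}$ and the (in particular $M$-hyponormal) operator $\tilde{B}$, Theorem \ref{specmeasure}(i) yields the commutation $|A|_{0}(T\vert_{\mathcal{M}})^{*} \subseteq (T\vert_{\mathcal{M}})^{*}|A|_{0}$, and Theorem \ref{specmeasure}(ii), applicable since $|A|_{0}$ is positive and injective on $\mathcal{M}$, yields $(T\vert_{\mathcal{M}})^{*} = \tilde{B} = (U\vert_{\mathcal{M}})^{*}B\vert_{\mathcal{N}}U\vert_{\mathcal{M}}$, the last asserted identity. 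From the commutation of $|A|_{0}$ with $T\vert_{\mathcal{M}}$ (and with its adjoint, by Fuglede applied to the normal $T\vert_{\mathcal{M}}$) I would lift to $|A|T \subseteq T|A|$ by decomposing $x \in \mathcal{D}(T)$ along $\mathcal{M} \oplus \mathcal{M}^{\perp}$ and using that $|A|$ annihilates $\mathcal{M}^{\perp}$ while $\mathcal{M}$ reduces $T$; the relation $|A^{*}|B \subseteq B|A^{*}|$ is the mirror statement read off from $|A^{*}|\vert_{\mathcal{N}} = V|A|_{0}V^{*}$ and $B\vert_{\mathcal{N}} = V(T\vert_{\mathcal{M}})^{*}V^{*}$; and $AT \subseteq B^{*}A$ follows from the chain $ATx = V|A|_{0}(T\vert_{\mathcal{M}})x_{1} = V(T\vert_{\mathcal{M}})|A|_{0}x_{1} = (B\vert_{\mathcal{N}})^{*}Ax$ after the same decomposition $x = x_{1} + x_{2}$, together with $(B\vert_{\mathcal{N}})^{*} = (B^{*})\vert_{\mathcal{N}}$.

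The main obstacle I expect is the domain bookkeeping rather than any deep idea: one must repeatedly check that the restricted operators $T\vert_{\mathcal{M}}$, $(T\vert_{\mathcal{M}})^{*}$ and $B\vert_{\mathcal{N}}$ are the honest adjoints and restrictions of the originals, which is precisely what the reducing hypotheses guarantee, that the factorizations through $V$ and $|A|_{0}$ respect these domains, and that the hypotheses of Theorem \ref{specmeasure} — normality of $(T\vert_{\mathcal{M}})^{*}$, positivity and injectivity of $|A|_{0}$, and the ambient condition $\mathcal{K} = \mathcal{H}$ now read on the single space $\mathcal{M}$ — are all in force before the theorem is invoked.
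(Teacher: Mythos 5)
The paper does not prove this statement at all: it is imported verbatim as a known result, namely Theorem 3.2 of Stochel's paper \cite{stochel} (the authors announce that ``throughout the paper we present known results as propositions''). So there is no in-paper proof to measure you against, and your attempt has to be judged on its own and against Stochel's original argument. On that basis your reconstruction is essentially sound. Part (i) is correct: the reducing hypothesis is exactly what gives $\mathcal{D}((T\vert_{\mathcal{M}})^{*})=\mathcal{D}(T^{*})\cap\mathcal{M}$ and $(T\vert_{\mathcal{M}})^{*}x=T^{*}x$, the graph of $B\vert_{\mathcal{N}}$ is $\mathcal{G}(B)\cap(\mathcal{N}\times\mathcal{N})$, and density of $A\vert_{\mathcal{M}}(\mathcal{D}((T\vert_{\mathcal{M}})^{*}))$ in $\mathcal{N}$ follows by continuity from $\overline{A(D)}\supseteq \overline{ran(A)}$ for any dense $D\subseteq\mathcal{M}$. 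In part (ii) your route differs slightly from Stochel's: where he invokes the classical symmetric Fuglede--Putnam theorem for two unbounded \emph{normal} operators after cancelling the unitary $V$, you route everything through the paper's Theorem \ref{specmeasure}, treating $\tilde{B}=V^{*}B\vert_{\mathcal{N}}V$ as $M$-hyponormal with $M=1$. That is legitimate and introduces no circularity, since Theorem \ref{specmeasure} is proved from Lemma \ref{contraction} and Proposition \ref{core} without reference to this proposition; what it buys is economy (one commutation theorem serves both the normal and the $M$-hyponormal case), at the cost of invoking a heavier tool than the normal--normal case actually needs. The remaining lifts ($|A|T\subseteq T|A|$, $|A^{*}|B\subseteq B|A^{*}|$, $AT\subseteq B^{*}A$) are correctly reduced to the commutation of $|A|_{0}$ with the spectral measure of $T\vert_{\mathcal{M}}$ together with $|A^{*}|=U|A|U^{*}$ and $(B\vert_{\mathcal{N}})^{*}=B^{*}\vert_{\mathcal{N}}$; the domain bookkeeping you defer is genuinely routine given the reducing hypotheses. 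I see no gap of substance.
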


Now we prove the asymmetric Fuglede-Putnam theorem for closed densely defined $ M $-hyponormal and normal operators. The following result is the motivated by (\cite{stochel},Proposition 4.1).
\begin{theorem} \label{main}
Suppose $ S \in \mathcal{C}(\mathcal{H}) $ is a normal operator and $  T\in \mathcal{C}(\mathcal{K}) $  is $ M $-hyponormal. If $ A \in \mathcal{B}(\mathcal{H},\mathcal{K}) $ is such that $ AS\subseteq TA. $ Then $ \mathcal{R}(A^*) $ reduces $ S $ and $T\vert_{\mathcal{R}(A)},~ S\vert_{\mathcal{R}(A^*)}$ are unitarily  equivalent normal operators.

\end{theorem}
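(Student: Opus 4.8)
The plan is to first control $|A|$ relative to the normal operator $S$, then carry the normal operator $S\vert_{\mathcal{R}(A^*)}$ across the polar decomposition of $A$ into $\mathcal{R}(A)$, and finally exploit $M$-hyponormality of $T$ to recognise the transported operator as exactly the part of $T$ in $\mathcal{R}(A)$.

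First I would establish the reduction of $S$. Applying Theorem \ref{specmeasure}(i) gives $|A|S \subseteq S|A|$; more to the point, its proof shows that $A^{*}A$ commutes with every spectral projection $E(\Omega)$ of $S$, so by the continuous functional calculus $|A| = (A^{*}A)^{1/2}$ commutes with each $E(\Omega)$ as well. Hence $\ker|A| = \mathcal{R}(A^*)^{\perp}$ is invariant under the spectral measure of $S$, which means $\mathcal{R}(A^*)$ reduces $S$ and $S_{0} := S\vert_{\mathcal{R}(A^*)}$ is normal. Next I would transport this operator. Writing $A = U|A|$ and setting $W := U\vert_{\mathcal{R}(A^*)}$, a unitary of $\mathcal{R}(A^*)$ onto $\mathcal{R}(A)$, put $A_{0} = A\vert_{\mathcal{R}(A^*)} = W|A_{0}|$ with $|A_{0}| = |A|\vert_{\mathcal{R}(A^*)}$ positive and injective. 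Restricting $AS \subseteq TA$ to the reducing subspace $\mathcal{R}(A^*)$ yields $A_{0}S_{0} \subseteq TA_{0}$, and the commutation $|A_{0}|S_{0} \subseteq S_{0}|A_{0}|$ survives. Since $\mathcal{D}(S_{0})$ is a core for $S_{0}$ and $|A_{0}|$ is a selfadjoint injective operator commuting with $S_{0}$, Proposition \ref{core} shows that $|A_{0}|\mathcal{D}(S_{0})$ is again a core for $S_{0}$. Substituting $A_{0} = W|A_{0}|$ and $|A_{0}|S_{0} = S_{0}|A_{0}|$ into $A_{0}S_{0} \subseteq TA_{0}$ gives $TW\eta = WS_{0}\eta$ for every $\eta$ in this core; as $T$ is closed, the normal operator $N := WS_{0}W^{*}$ on $\mathcal{R}(A)$ satisfies $N \subseteq T$, and therefore $N \subseteq T\vert_{\mathcal{R}(A)}$.

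The decisive step is to upgrade this to the equality $T\vert_{\mathcal{R}(A)} = N$, and it is here that $M$-hyponormality enters through the inclusion $\mathcal{D}(T) \subseteq \mathcal{D}(T^{*})$. For $\zeta \in \mathcal{D}(N) \subseteq \mathcal{D}(T)$ the vector $T^{*}\zeta$ therefore exists; testing against $\eta \in \mathcal{D}(N)$ and using $N \subseteq T$ gives $\langle \eta, N^{*}\zeta\rangle = \langle N\eta, \zeta\rangle = \langle T\eta, \zeta\rangle = \langle \eta, T^{*}\zeta\rangle$, whence $P_{\mathcal{R}(A)}T^{*}\zeta = N^{*}\zeta$ since $N^{*}\zeta \in \mathcal{R}(A)$. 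Now for any $x \in \mathcal{D}(T\vert_{\mathcal{R}(A)})$ and every $\zeta \in \mathcal{D}(N^{*}) = \mathcal{D}(N)$, the relation $x \in \mathcal{R}(A)$ yields $\langle (T\vert_{\mathcal{R}(A)})x, \zeta\rangle = \langle Tx, \zeta\rangle = \langle x, T^{*}\zeta\rangle = \langle x, N^{*}\zeta\rangle$. This is precisely the assertion that $x \in \mathcal{D}(N^{**}) = \mathcal{D}(N)$ with $Nx = (T\vert_{\mathcal{R}(A)})x$, so $T\vert_{\mathcal{R}(A)} \subseteq N$ and equality holds. Consequently $T\vert_{\mathcal{R}(A)} = N = WS_{0}W^{*}$ with $W$ unitary, so $T\vert_{\mathcal{R}(A)}$ and $S\vert_{\mathcal{R}(A^*)} = S_{0}$ are unitarily equivalent normal operators.

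I expect the main obstacle to be exactly the reverse inclusion $T\vert_{\mathcal{R}(A)} \subseteq N$, that is, ruling out the possibility that the part of $T$ in $\mathcal{R}(A)$ properly extends the normal operator $N$. The transport argument only ever produces $N \subseteq T$, and for a general closed operator one cannot close this gap; it is the $M$-hyponormal hypothesis $\mathcal{D}(T) \subseteq \mathcal{D}(T^{*})$ that makes $T^{*}\zeta$ available and drives the adjoint test forcing $x \in \mathcal{D}(N)$. This is the structural reason the statement is placed in the $M$-hyponormal category rather than among arbitrary closed operators, and it is why $M$-hyponormality, not merely the weaker norm estimate with $M>1$ used elsewhere, is the operative hypothesis.
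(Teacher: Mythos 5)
Your proof is correct, and its first half (using Theorem \ref{specmeasure}(i) to commute $|A|$ with the spectral measure of $S$ and thereby show that $\mathcal{R}(A^*)$ reduces $S$) is exactly the paper's argument. The second half, however, takes a genuinely different route. The paper pulls $T\vert_{\mathcal{R}(A)}$ \emph{back} to $\mathcal{R}(A^*)$: it invokes Proposition \ref{stochel}(i) and Stochel's Lemma 3.1(v) to obtain $|A\vert_{\mathcal{R}(A^*)}|\,S\vert_{\mathcal{R}(A^*)} \subseteq V\,|A\vert_{\mathcal{R}(A^*)}|$ with $V=(U\vert_{\mathcal{R}(A^*)})^*\,T\vert_{\mathcal{R}(A)}\,U\vert_{\mathcal{R}(A^*)}$, checks by a direct norm computation that $V$ is again $M$-hyponormal, and then quotes Theorem \ref{specmeasure}(ii) with the positive injective intertwiner $|A\vert_{\mathcal{R}(A^*)}|$ to conclude $S\vert_{\mathcal{R}(A^*)}=V$. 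You instead push $S_{0}=S\vert_{\mathcal{R}(A^*)}$ \emph{forward} through $W=U\vert_{\mathcal{R}(A^*)}$, obtaining $N=WS_{0}W^{*}\subseteq T\vert_{\mathcal{R}(A)}$ via the core argument (Proposition \ref{core}) together with the closedness of $T$, and then prove the reverse inclusion by hand with an adjoint computation that uses only $\mathcal{D}(T)\subseteq\mathcal{D}(T^{*})$ and $N^{**}=N$. The two arguments rest on the same three pillars --- the commutation from Theorem \ref{specmeasure}(i), Proposition \ref{core}, and the domain inclusion $\mathcal{D}(T)\subseteq\mathcal{D}(T^{*})$ --- but yours is more self-contained (it bypasses Stochel's Lemma 3.1(v) and the verification that the conjugated operator $V$ is $M$-hyponormal), and it isolates cleanly that the final identification is a pure maximality statement: a densely defined normal operator in $\mathcal{R}(A)$ contained in $T\vert_{\mathcal{R}(A)}$ must equal it once $\mathcal{D}(T)\subseteq\mathcal{D}(T^{*})$. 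One small correction to your closing remark: the norm inequality in the definition of $M$-hyponormality is not dispensable here --- it drives Lemma \ref{contraction} and hence the range inclusions behind Theorem \ref{specmeasure}(i), which both you and the paper need in order to get the commutation $|A|E(\Omega)=E(\Omega)|A|$ in the first place.
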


\begin{proof}
Let  $ \Omega $ be a Borel subset of $\mathbb{C}$ and let $E $ be the spectral measure of $ S$. We have $ E(\Omega) $ is an orthogonal projection. To prove $ \mathcal{R}(A^*) $ reduces $ S, $ it is sufficient to prove that $\mathcal{R}(A^*)= \overline {ran( \vert A \vert)} $ is invariant under $ E(\Omega)$.

Let $ y \in \overline {ran( \vert A \vert)}$. Then there exist a sequence $ (y_n)\in ran( \vert A \vert) $ such that $ y_n$ converges to $y $. Since $ E(\Omega)$ is bounded, $ E(\Omega)y_n$ converges to $E(\Omega)y$ . Since $ (y_n)\in ran( \vert A \vert) $, there exist $ x_n \in \mathcal{D} (|A|)$ such that $ y_n=|A|x_n. $ Therefore, $ E(\Omega)|A|x_n$ converges to $ E(\Omega)y. $ From Theorem \ref{specmeasure} (i), $ |A|E(\Omega)=E(\Omega)|A|. $ Hence, $ |A|E(\Omega)x_n$ converges to $E(\Omega)y.$ Thus, $ \mathcal{R}(A^*) $ reduces $ S $.

Since $ AS\subseteq TA $ and $ \mathcal{R}(A^*) $ reduces $ S,$ we have $ T\vert_{\mathcal{R}(A)} $ is a closed densely defined operator in $ \mathcal{R}(A) $ and \begin{equation}\label{4}
 A \vert_{\mathcal{R}(A^*)} ~~ S \vert_{\mathcal{R}(A^*)}\subseteq T\vert_{\mathcal{R}(A)}~~ A \vert_{\mathcal{R}(A^*)}.
\end{equation} by Proposition \ref{stochel} (i). Since $ T $ is $ M $-hyponormal, $ T\vert_{\mathcal{R}(A)} $ is a closed $ M $-hyponormal operator in $ \mathcal{R}(A) $ by Lemma \ref{restriction}. From (\cite[Lemma 3.1(v)]{stochel}),
 we have $$ |~A \vert_{\mathcal{R}(A^*)}~| ~~~S \vert_{\mathcal{R}(A^*)} \subseteq (U \vert_{\mathcal{R}(A^*)})^* ~~T\vert_{\mathcal{R}(A)}~~U \vert_{\mathcal{R}(A^*)}~~ |A \vert_{\mathcal{R}(A^*)}| . $$
Let $ W = U \vert_{\mathcal{R}(A^*)}$, and  $V = W^* ~~T\vert_{\mathcal{R}(A)}~~W .$
Also we have $W$ is unitary isomorphism and $ T\vert_{\mathcal{R}(A)}$ is  $M$-hyponormal. Then for $ x \in \mathcal{R}(A^*),$ \\ 
\begin{align*}
\parallel (V- z I)x \parallel &= \parallel  W^* (~~T\vert_{\mathcal{R}(A)}~- z I)^*~~Wx \parallel \\ &= \parallel (~~T\vert_{\mathcal{R}(A)} ~-z I)^*~~Wx \parallel   \\&\leq M \parallel (~~T\vert_{\mathcal{R}(A)} ~-z I)Wx \parallel  \\ &= M \parallel W^*(~~T\vert_{\mathcal{R}(A)} ~-z I)Wx \parallel ~~~ \\ &= M \parallel (V-zI)x \parallel.
\end{align*}
Hence $(U \vert_{\mathcal{R}(A^*)})^* ~~T\vert_{\mathcal{R}(A)}~~U \vert_{\mathcal{R}(A^*)} $ is a closed $ M $-hyponormal operator.\\
 From Theorem \ref{specmeasure} (ii), we get  $ S \vert_{\mathcal{R}(A^*)}=(U_{A} \vert_{\mathcal{R}(A^*)})^* ~~T\vert_{\mathcal{R}(A)}~~U_{A} \vert_{\mathcal{R}(A^*)} $ because $ ker(|~A \vert_{\mathcal{R}(A^*)}~|)= ker(A \vert_{\mathcal{R}(A^*)})= \{0\}$. Thus, $T\vert_{\mathcal{R}(A)},~ S\vert_{\mathcal{R}(A^*)}$ are unitarily  equivalent normal operators.

\end{proof}
Stochel (\cite{stochel}) proved the following result for closed hyponormal and closed subnormal operators. Now we extend the result to closed $M$-hyponormal and closed subnormal operators using the method of (\cite[Theorem 4.2]{stochel})
\begin{theorem}
Let $ B\in \mathcal{C}(\mathcal{H})$ be subnormal (resp. a closed $ M $-hyponormal operator in $ \mathcal{H} $ ), $ T \in \mathcal{C}(\mathcal{K}) $ be $ M $-hyponormal (resp. a closed subnormal operator in $ \mathcal{K} $ ) and $ A \in \mathcal{B}(\mathcal{H},\mathcal{K})$ is such that $ AB^*\subseteq TA. $ Then \\ (i) $ AB\subseteq T^*A .$\\(ii) $ \mathcal{R}(A^*) $ reduces $ B $ to the normal operator $ B\vert_{\mathcal{R}(A^*)}. $\\(iii) $ \mathcal{R}(A) $ reduces $ T $ to the normal operator $ T\vert_{\mathcal{R}(A)} .$
\end{theorem}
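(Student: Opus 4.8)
The plan is to reduce both cases to Theorem \ref{main} by replacing the subnormal operator among $B,T$ by a normal extension, and then to extract the intertwining (i) from Proposition \ref{stochel}(ii). Concretely, Proposition \ref{stochel} with its operators $T,B$ taken to be \emph{our} $B,T$ (so that its hypothesis $AT^*\subseteq BA$ reads $AB^*\subseteq TA$) shows that once we know (ii) $\mathcal{R}(A^*)$ reduces $B$ to a normal operator and (iii) $\mathcal{R}(A)$ reduces $T$ to a normal operator, its part (ii) immediately yields $AB\subseteq T^*A$, i.e.\ conclusion (i). Thus the real content is to establish (ii) and (iii), and the two alternatives (subnormal $B$ / $M$-hyponormal $T$, versus $M$-hyponormal $B$ / subnormal $T$) are dual to each other under passing to adjoints.

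Consider first $B$ subnormal and $T$ $M$-hyponormal. I would take a minimal normal extension $N$ of $B$ on a Hilbert space $\mathcal{H}'\supseteq\mathcal{H}$ for which $\mathcal{H}$ is invariant under $N$ and $B=N\vert_{\mathcal{H}}$, so that with respect to $\mathcal{H}'=\mathcal{H}\oplus(\mathcal{H}'\ominus\mathcal{H})$ the operator $N$ is upper block-triangular and $(N\vert_{\mathcal{H}})^{*}\subseteq B^{*}$. Let $P:\mathcal{H}'\to\mathcal{H}$ be the orthogonal projection and set $\widehat{A}=AP\in\mathcal{B}(\mathcal{H}',\mathcal{K})$. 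First I would verify the intertwining $\widehat{A}\,N^{*}\subseteq T\,\widehat{A}$: for $\xi\in\mathcal{D}(N^{*})=\mathcal{D}(N)$ the block form gives $PN^{*}\xi=(N\vert_{\mathcal{H}})^{*}P\xi$, and since $(N\vert_{\mathcal{H}})^{*}\subseteq B^{*}$ and $AB^{*}\subseteq TA$ we obtain $\widehat{A}N^{*}\xi=A(N\vert_{\mathcal{H}})^{*}P\xi=AB^{*}P\xi=TAP\xi=T\widehat{A}\xi$. Now $N^{*}$ is normal and $T$ is $M$-hyponormal, so Theorem \ref{main} applies to $\widehat{A}N^{*}\subseteq T\widehat{A}$. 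Since $\mathcal{R}(\widehat{A}^{*})=\mathcal{R}(A^{*})\subseteq\mathcal{H}$ and $\mathcal{R}(\widehat{A})=\mathcal{R}(A)$, it gives that $\mathcal{R}(A^{*})$ reduces $N^{*}$ (hence $N$) and that $N^{*}\vert_{\mathcal{R}(A^{*})}$ and $T\vert_{\mathcal{R}(A)}$ are unitarily equivalent normal operators. A short invariance check then transfers the reduction of $N$ to a reduction of $B$, yielding (ii) with $B\vert_{\mathcal{R}(A^{*})}=N\vert_{\mathcal{R}(A^{*})}$ normal; and for (iii), $T\vert_{\mathcal{R}(A)}$ is normal while $\mathcal{R}(A)$ is invariant under the $M$-hyponormal $T$, so Theorem \ref{reduces} shows $\mathcal{R}(A)$ reduces $T$. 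With (ii) and (iii) secured, Proposition \ref{stochel}(ii) gives (i).

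The second case, $B$ $M$-hyponormal and $T$ subnormal, I would handle by duality: taking adjoints in $AB^{*}\subseteq TA$ yields $A^{*}T^{*}\subseteq BA^{*}$. Letting $N'$ be a minimal normal extension of $T$ on $\mathcal{K}'\supseteq\mathcal{K}$ with $T=N'\vert_{\mathcal{K}}$, and setting $\widehat{A'}=A^{*}P'$ with $P':\mathcal{K}'\to\mathcal{K}$ the projection, the same computation gives $\widehat{A'}\,N'^{*}\subseteq B\,\widehat{A'}$. Applying Theorem \ref{main} to this (normal $N'^{*}$, $M$-hyponormal $B$) produces the analogous conclusions with the roles of $\mathcal{R}(A)$ and $\mathcal{R}(A^{*})$ interchanged: $\mathcal{R}(A)\subseteq\mathcal{K}$ reduces $N'$, so $\mathcal{R}(A)$ reduces $T$ to the normal operator $T\vert_{\mathcal{R}(A)}=N'\vert_{\mathcal{R}(A)}$ (giving (iii)), while $B\vert_{\mathcal{R}(A^{*})}$ is normal and Theorem \ref{reduces} (applied to the $M$-hyponormal $B$) shows $\mathcal{R}(A^{*})$ reduces $B$ (giving (ii)); Proposition \ref{stochel}(ii) again delivers (i).

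The hard part will be the unbounded domain bookkeeping attached to the normal extension, not the structural argument. Specifically, I must justify working with a normal extension for which $\mathcal{H}$ (resp.\ $\mathcal{K}$) is invariant and $B=N\vert_{\mathcal{H}}$ (resp.\ $T=N'\vert_{\mathcal{K}}$) holds on the nose, i.e.\ $\mathcal{D}(B)=\mathcal{D}(N)\cap\mathcal{H}$, and I must make the formal block-adjoint identity $PN^{*}\xi=(N\vert_{\mathcal{H}})^{*}P\xi$ rigorous on the correct domains, together with $(N\vert_{\mathcal{H}})^{*}\subseteq B^{*}$. The other delicate point is translating ``$\mathcal{R}(A^{*})$ reduces the large normal operator $N$'' into ``$\mathcal{R}(A^{*})$ reduces $B$'' so that $B\vert_{\mathcal{R}(A^{*})}$ is genuinely the normal operator $N\vert_{\mathcal{R}(A^{*})}$; once these domain matters are settled, everything else is a direct invocation of Theorems \ref{main} and \ref{reduces} and Proposition \ref{stochel}.
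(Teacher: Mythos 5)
Your overall strategy---replace the subnormal operator by a normal extension, feed the resulting intertwining into Theorem \ref{main}, and extract (i) from Proposition \ref{stochel}(ii)---is the paper's own strategy: your $\widehat{A}=AP$ is exactly the adjoint of the paper's auxiliary operator $Y\colon\mathcal{K}\to\mathcal{L}$, $Yx=A^{*}x$, and your intertwining $\widehat{A}N^{*}\subseteq T\widehat{A}$ is the paper's $Y^{*}S^{*}\subseteq TY^{*}$. Note that your verification of it does not actually need the block-triangular picture: for $\xi\in\mathcal{D}(N^{*})$ and $y\in\mathcal{D}(B)$ one has $\langle By,P\xi\rangle=\langle Ny,\xi\rangle=\langle y,PN^{*}\xi\rangle$, so $PN^{*}\subseteq B^{*}P$ follows from $B\subseteq N$ alone; this matters because the paper's definition of unbounded subnormality only provides $B\subseteq N$, not invariance of $\mathcal{H}$ under $N$. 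Up through conclusion (iii) your argument is sound and coincides with the paper's (your explicit appeal to Theorem \ref{reduces} there is, if anything, more careful than the text).

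The genuine gap is in (ii). You deduce it from ``$\mathcal{R}(A^{*})$ reduces $N$'' by identifying $B\vert_{\mathcal{R}(A^{*})}$ with $N\vert_{\mathcal{R}(A^{*})}$, and you correctly flag that this needs a normal extension satisfying $\mathcal{D}(B)=\mathcal{D}(N)\cap\mathcal{H}$ with $\mathcal{H}$ invariant. That tightness is not part of the definition in force here and is not established; without it you only obtain $B\vert_{\mathcal{R}(A^{*})}\subseteq N\vert_{\mathcal{R}(A^{*})}$, i.e.\ that $B\vert_{\mathcal{R}(A^{*})}$ is \emph{subnormal}, and a proper restriction of a normal operator need not be normal. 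The paper closes exactly this hole by a second pass through the machinery: from $A^{*}T^{*}\subseteq BA^{*}$ and the already-proved reduction of $T$ by $\mathcal{R}(A)$, Proposition \ref{stochel}(i) yields $(A\vert_{\mathcal{R}(A^{*})})^{*}\,(T\vert_{\mathcal{R}(A)})^{*}\subseteq B\vert_{\mathcal{R}(A^{*})}\,(A\vert_{\mathcal{R}(A^{*})})^{*}$, in which $(T\vert_{\mathcal{R}(A)})^{*}$ is normal and $B\vert_{\mathcal{R}(A^{*})}$ is subnormal (being a restriction of $S$), hence $M$-hyponormal; Theorem \ref{main} applied to this relation forces $B\vert_{\mathcal{R}(A^{*})}$ to be normal, and Theorem \ref{reduces} then upgrades invariance to reduction. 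You would need to insert this second application (or an equivalent argument) to complete (ii); your duality reduction of the second case to the first is fine and matches the paper.
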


\begin{proof}
First assume that $ B\in \mathcal{C}(\mathcal{H})$ is subnormal and $ T \in \mathcal{C}(\mathcal{K}) $ is $ M $-hyponormal. Since $ B $ is subnormal, there exist a normal extension $ S $ on the Hilbert space $ \mathcal{L} \supseteq \mathcal{H}. $ Define $ Y \in \mathcal{B}(\mathcal{K},\mathcal{L})$ by $ Yx=A^*x ,x\in \mathcal{K}.$ From (\cite[Theorem 3.8]{stochel}), we have $ \mathcal{R}(Y^*)= \mathcal{R}(A). $ \\ Since $ AB^*\subseteq TA, $ we have  \begin{equation} \label{sto}
A^* T^*\subseteq BA^*.
\end{equation}
Since from equation (\ref{sto}) and $ Yx=A^*x, $ we have 
\begin{align*}
YT^*x &= A^*T^*x \\ &= BA^*x \\&= SA^*x \\&=SYx, ~x\in \mathcal{D}(T^*). 
\end{align*}
Thus $ YT^*\subseteq SY. $ Hence $ Y^*S^*\subseteq TY^*. $ Since $ \mathcal{R}(Y^*)= \mathcal{R}(A) $,    we have $ T\vert_{\mathcal{R}(A)} $ and $ S\vert_{\mathcal{R}(A^*)}$ are unitarily equivalent normal operators by Theorem \ref{main}. Thus, $\mathcal{R}(A)  $ reduces $ T $ to the normal operator $ T\vert_{\mathcal{R}(A)} $. \\  Since $ A^* T^*\subseteq BA^*~\mbox{and} ~\mathcal{R}(A) $ reduces $ T  $,  we have
$ B\vert_{\mathcal{R}(A^*)}$ is closed densely defined in $ \mathcal{R}(A^*) $ and
\begin{equation}\label{2} 
(A \vert_{\mathcal{R}(A^*)})^* ~~(T \vert_{\mathcal{R}(A)})^{*}\subseteq B\vert_{\mathcal{R}(A^*)}~~ (A \vert_{\mathcal{R}(A^*)})^*
\end{equation} by Proposition \ref{stochel} (i) and (\cite[Lemma 3.1(iii)]{stochel}). Since $  B\vert_{\mathcal{R}(A^*)} \subseteq B \subseteq S, $  $  B\vert_{\mathcal{R}(A^*)} $ is subnormal. Then $  B\vert_{\mathcal{R}(A^*)} $ is $ M $-hyponormal. Thus, $ \mathcal{R}(A \vert_{\mathcal{R}(A^*)})^*=\mathcal{R}(A^*) $ reduces $ B\vert_{\mathcal{R}(A^*)} $ to the normal operator from equation (\ref{2}) and  Theorem \ref{main}. Hence by Theorem \ref{reduces}, $ \mathcal{R}(A^*) $ reduces $ B $ to the normal operator $ B\vert_{\mathcal{R}(A^*)}.$  The result (i) follows from Proposition \ref{stochel} (ii).\\
Next we assume that  $ B\in \mathcal{C}(\mathcal{H})$ is $ M $-hyponormal and $ T \in \mathcal{C}(\mathcal{K}) $ is subnormal. Since $ A^*T^* \subseteq BA^*,$ $ \mathcal{R}(A) $ and  $ \mathcal{R}(A^*) $ are reducing subspace for $ T $ and $ B $ respectively. Also the part $ T\vert_{\mathcal{R}(A)} $ and  $ B\vert_{\mathcal{R}(A^*)}$ are normal. By Proposition \ref{stochel} (ii), we have $ AB\subseteq T^*A.$ 

\end{proof}
\begin{corollary}
Let $ B\in \mathcal{C}(\mathcal{H})$ be subnormal (resp. a closed $ M $-hyponormal operator in $ \mathcal{H} $ ), $ T \in \mathcal{C}(\mathcal{K}) $ be $ M $-hyponormal (resp. a closed subnormal operator in $ \mathcal{K} $ ) and $ A \in \mathcal{B}(\mathcal{H},\mathcal{K})$ is such that $ AB^*\subseteq TA. $ Then \\ (i) If $ ker(A)=\{0\}, $ then $ B $ is normal.\\(ii) If $ ker(A^*)=\{0\}, $ then $ T $ is normal.
\end{corollary}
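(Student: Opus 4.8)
The plan is to obtain both assertions as immediate specializations of the preceding theorem, using only the elementary Hilbert-space identities that link the kernel of a bounded operator to the orthogonal complement of the closure of the range of its adjoint. First I would recall that for $A \in \mathcal{B}(\mathcal{H},\mathcal{K})$ one always has $ker(A) = ran(A^*)^{\perp} = \mathcal{R}(A^*)^{\perp}$ and $ker(A^*) = ran(A)^{\perp} = \mathcal{R}(A)^{\perp}$, where $\mathcal{R}(A^*) = \overline{ran(A^*)}$ and $\mathcal{R}(A) = \overline{ran(A)}$ as fixed earlier in Section 2. These are standard and require no separate argument.

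For part (i), the hypothesis $ker(A) = \{0\}$ forces $\mathcal{R}(A^*)^{\perp} = \{0\}$, hence $\mathcal{R}(A^*) = \mathcal{H}$. The preceding theorem, part (ii), states that $\mathcal{R}(A^*)$ reduces $B$ to the normal operator $B\vert_{\mathcal{R}(A^*)}$; since $\mathcal{R}(A^*) = \mathcal{H}$, the restriction $B\vert_{\mathcal{R}(A^*)}$ is nothing but $B$ itself, and therefore $B$ is normal. For part (ii) I would argue symmetrically: $ker(A^*) = \{0\}$ gives $\mathcal{R}(A) = \mathcal{K}$, and part (iii) of the preceding theorem says that $\mathcal{R}(A)$ reduces $T$ to the normal operator $T\vert_{\mathcal{R}(A)}$; with $\mathcal{R}(A) = \mathcal{K}$ this reads that $T = T\vert_{\mathcal{K}}$ is normal.

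I do not expect a genuine obstacle here, since the corollary is a direct reading of the main theorem once the range closures are identified with the full spaces. The only point deserving care is the bookkeeping that, in the sense of the restriction $T\vert_{\mathcal{M}}$ defined at the start of Section 2, one really has $B\vert_{\mathcal{H}} = B$ and $T\vert_{\mathcal{K}} = T$, so that the conclusion "reduces to a normal restriction on the whole space" is literally the statement that the operator itself is normal. This is immediate from the definition of $\mathcal{D}(T\vert_{\mathcal{M}})$ when $\mathcal{M}$ is the entire Hilbert space, and everything else is an unchanged application of the theorem preceding this corollary.
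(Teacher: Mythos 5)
Your argument is correct and is exactly the intended deduction: the paper states this corollary without proof, and the standard identities $ker(A)=\mathcal{R}(A^*)^{\perp}$ and $ker(A^*)=\mathcal{R}(A)^{\perp}$ reduce both parts to the preceding theorem with $\mathcal{R}(A^*)=\mathcal{H}$, resp.\ $\mathcal{R}(A)=\mathcal{K}$. Nothing further is needed.
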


\end{document}